\title{A note on the development of singularities on solutions to the Navier-Stokes equations under super critical forcing terms}
\author[1]{Hugo Beir\~{a}o da Veiga \thanks{Hugo Beir\~{a}o da Veiga (\texttt{hbeiraodaveiga@gmail.com}) is partially supported by FCT (Portugal) under the project: UIDB/MAT/04561/2020.}}
\affil[1]{\small{Department of Mathematics, Pisa University, Pisa, Italy\par
Academia das Ci\^encias de Lisboa (the Portuguese Academy of Science)}}
\author[2]{Jiaqi Yang\thanks{Jiaqi Yang (\texttt{yjqmath@nwpu.edu.cn, yjqmath@163.com}) is supported by NSF of China under Grant: 12001429.}}
\affil[2]{\small{School of Mathematics and Statistics, Northwestern Polytechnical University, Xi'an, 710129, China}}
\date{}
\newtheorem{theorem}{Theorem}[section]
\newtheorem{lemma}[theorem]{Lemma}
\newtheorem{corollary}[theorem]{Corollary}
\newtheorem{problem-2}[theorem]{Problem}
\theoremstyle{remark}
\newtheorem{remark}{Remark}[section]
\theoremstyle{definition}
\numberwithin{equation}{section}
\newcommand{\Om}{{\Omega}}
\newcommand{\ep}{{\epsilon}}
\newcommand{\cp}{{\widetilde{p}}}
\newcommand{\cq}{{\widetilde{q}}}
\newcommand{\p}{\partial}
\newcommand{\R}{\mathbb{R}}
\newcommand{\al}{\alpha}
\newcommand{\f}{\frac}
\newcommand{\n}{\nabla}
\newcommand{\ed}{{\end{document}}}
\begin{document}
\maketitle
\begin{abstract}
Recently Qi S. Zhang provides examples of solutions to the Navier-Stokes equations which, under suitable hypothesis, blow up in finite time. He considers axially symmetric solutions in a cylinder $D\,$ under appropriate boundary conditions and under the effect of super critical external forces $f\,.$ The loss of boundedness for the velocity field, as $t\rightarrow T\,,$ is the basic case of blow up. However a more general situation is considered below, as explained in the preamble.\par
In his main result Zhang exhibits, for each $q<\infty\,,$ a blow up solution with an external force  $f\in L^q(0,T;L^1(D))\,.$ Following Zhang, we construct blow up solutions with forcing terms in $L^q(0,T;L^p(D))\,,$ for suitable pairs $(q,p)\,.$ In particular our results contain Zhang's result. A significant particular case is the existence of external forces $f \in L^1(0,T;L^p(D))\,,$ for every $p<2$, for which the velocity blows up in a finite time. The significant case $p=2$ remains open.

\end{abstract}
\noindent\textbf{Mathematics Subject Classification 2020:} 35Q30, 76D03, 35A01.\\
%
\noindent \textbf{Keywords:} Navier-Stokes equations, classical solutions, development of singularities, blow up under forcing terms.
\section{Preamble}

The blow-up problem for solutions to the Navier-Stokes equations, even under suitable external forces, is certainly one of the most important open problems in the related theory. Curiously, as far as we know, the corresponding literature is very poor.\par
To prove the global regularity in time of solutions under the effect of any bounded external force or, conversely, deny this statement, that is, to exhibit bounded forces under which the solutions blows-up, seems to us a goal of truly exceptional scope, beyond our claims. Let's recall that the fundamental, very profound, known results of non-uniqueness of the solution must take into account non-regular external forces.\par
In the following, by blow up as $t\rightarrow T\,,$ we mainly mean loss of boundedness for the velocity field. But blow up of higher order derivatives will be also considered, since their boundedness is necessary if one wants to ensure that solutions remain classical, as they are here before the blow up time. In fact, a particularly simple, and clarifying feature of our approaches is that external forces, and solutions, are smooth in $[0,T)$. Blow up occurs just as $t\rightarrow T\,.$\par

Let's show a very particular case. Given $p<2$, we show the existence of bounded, smooth, external forces in the time interval $[0,T[$ which belong to the space $L^1(0,T;L^p(\Om))$, and such that the corresponding velocities, obviously regular in $[0,T[$, blows-up at time $T$. Unfortunately, the case $p=2$ remains open.

\vspace{0.2cm}

It may appear negative to some readers that our solution also solves the linear Stokes equations under the same external force. We want here to partially refute this conclusion.\par
Let us assume that the same result was instead proved under a strong and fundamental appeal to the non-linear term. We ask ourselves how this result would then have been considered. By taking into account the lack of better results in the literature known to us, we are led to believe that the answer would have been favorable to the result. This belief leads us to show the proof of the present result, whose greatest merit is maybe its simplicity. We hope that future research with recourse to the nonlinear term will lead to the same conclusion but under the effect of "more integrable" external forces.
\section{Introduction}
In this paper, we are concerned with the Navier-Stokes equations
\begin{equation}\label{NS}
	\begin{cases}
		\Delta v-v\cdot\n v-\n P-\p_tv=f&\quad\text{in $D\times(0,\infty)$}\,,\\
		\n\cdot v=0&\quad\text{in $D\times(0,\infty)$}\,,\\
		v(x,0)=v_0\,,
	\end{cases}
\end{equation}
under suitable boundary conditions, where $v$ is the velocity, $P$ is the pressure, and $f$ is a given forcing term. The well known classical spaces $L^q(0,T;L^p(D))$ are simply denoted by $L^q_tL^p_x\,.$

J. Leray \cite{leray} proved that for divergence-free $v_0\in L^2$ and $f\in L^1_tL^2_x $, there exists a global weak solution to the above problem, now known as Leray-Hopf solutions (see \cite{hopf} for the contribution of Hopf in bounded domains). Existence of global regular solutions and uniqueness remain the more important open problem in the theory of the Navier-Stokes equations, at least from the theoretical point of view. Below, by looking for blow up solutions, we are indirectly concerned with the above first problem (however we give some information on the second problem).

After Leray and Hopf's works many results on the above subjects appeared. An important progress is given by the so called Ladyzhenskaya-Prodi-Serrin sufficient condition for regularity:
If $u\in L^q(0,T;L^p(\Omega))$ with
\begin{equation}\label{1p2}
	\f2q+\f3p=1\,,\quad 3< p\leq+\infty\,,
\end{equation}
then $u$ is regular. See \cite{giga}, \cite{B87}, and \cite{G-P}, where completely different and independent proofs are shown (concerning \cite{B87} we refer to the 7th. section in reference \cite{BY20}). Assumption \eqref{1p2} was previously considered by other authors in studying the uniqueness problem. See Galdi's reference \cite{Ga} for a very interesting treatment of the classical results on energy equality, uniqueness, and regularity. Further, the first author of this paper extend the above result to the case of assumptions on the vorticity $\omega=\n\times u$, see \cite{BV-95}. It is proved that if $\omega\in L^q(0,T;L^p(\Omega))$ with
\begin{equation}
	\f2q+\f3p=2\,,\quad \f32\leq p\leq+\infty\,,
\end{equation}
then $u$ is regular.\par
The much harder case $p=3$ in equation \eqref{1p2} was reached by L. Escauriaza, G.A. Seregin, V. \v{S}ver\'{a}k, see \cite{ESS}.

\vspace{0.2cm}

Concerning non-uniqueness of \eqref{NS}, not treated below, a fundamental contribution was given in O.A. Ladyzhenskaya reference \cite{La-69}, following a previous idea due to V. Scheffer. However the spatial domain depends on time (roughly, like a revolution cone with the time direction as rotation axis, and the vertex at time zero). Nevertheless the result is significant, since for regular data the solution is unique.\par
In \cite{BV} non-uniqueness was proven for some solutions in the space $L^{\infty}_tL^2_x\,,$ for $f=0$. On the other hand, D. Albritton, E. Bru\'e and M. Colombo established, in reference \cite{ABC}, the non-uniqueness of \eqref{NS} in the energy space for a forcing term in the super-critical space $L^1_tL^2_x$. Recall that a forcing term in local $L^q_tL^p_x$ spaces is super-critical, critical, or sub-critical if $\f2q+\f3p$ is larger, equal, or smaller than $2,$ respectively.

\section{The blow up problem}
\subsection{Leray's proposal}
In a famous 1934 work (see \cite{leray}, chapter III, paragraph 20, page 224) Leray proposed the construction of possible self-similar solutions of the Navier-Stokes equations developing a singularity at a certain time $t=T$. Leray looks for self-similar solutions of the following form:
\begin{equation}
u=\f{1}{\sqrt{2(T-t)}}U\left(\f{x}{\sqrt{2(T-t)}}\right)\,,\quad P=\frac{1}{2(T-t)} \Pi(\frac{x}{\sqrt{(T-t)}})\,,
\end{equation}
where $(U(y),\Pi(y))$ should satisfy
\begin{equation}
\begin{split}
-\Delta U+U+(y\cdot\n) U+(U\cdot\n)U+\n\Pi=0\,,\\
\text{div} U=0\,.
\end{split}
\end{equation}
If $U\neq0$ then $u$ develops a singularity at $T$. A main difficulty in Leray's approach is due to avoiding external forces. In fact, in the absence of external forces, even in the most general case, the non-existence of Leray's blow up self-similar solutions was proved in 1996, after 62 years of attempts, by J. Ne\v{c}as, M. R$\mathring{\text{u}}$zi\v{c}ka, and V. \v{S}ver\'{a}k in the historical reference \cite{NRS}.\par

However, if we introduce a self-similar external force $f$, namely
\begin{equation}
f=\frac{1}{[2(T-t)]^{3/2}} F\left(\frac{x}{\sqrt{2(T-t)}}\right)\,, 
\end{equation}
the situation becomes much simpler. In this case, $(U(y),\Pi(y))$ should satisfy
\begin{equation}
	\begin{split}
		-\Delta U+U+(y\cdot\n) U+(U\cdot\n)U+\n\Pi=F\,,\\
		\text{div} U=0\,.
	\end{split}
\end{equation}
If $U=0$, then $\n\Pi=F$ which implies that $\n\times F=0$\,. The following conclusion holds: \\
{\it{If $\n\times F\neq0$, then $U\neq0$.}}\\
Hence, the external force will lead to a singularity.

\subsection{Zhang's approach}
One may found the very basic approach underlying Zhang's article in the above famous Leray's 1934 work. In comparison to Leray's proposal, Zhang's setting is extremely simplified. On the other hand restrictions made more difficult the real, effective, existence of blow up solutions. However, by admitting suitable external forces, the existence of significant blow up solutions holds.\par%
It is worth noting that, as far as we know, there are no proof of blow up under bounded external forces (similarly to non-uniqueness).\par
We may merely quote the 1985 reference \cite{Sch-85} where V. Scheffer considered weak solutions to the Navier-Stokes equations with an external force that always acts against the direction of the flow, and proved that there exists a solution with an internal singularity. However, as the author remarks, $f$ is very singular, and $u$ is not a real solution. See \cite{Sch-85} assertion 2.

\vspace{0.2cm}

Let's now give some useful insight into the proof of Zhang's main theorem, which establish the following result, see \cite{Zhang} Theorem 1.1-(2): For any $q>1$ there is a regular solution of \eqref{NS} in $[0,T[\times D\,,$ with a forcing term in the super-critical space $L^q_tL^1_x\,,$ which blows up at the final time $T\,.$\par
The author starts from the following axially symmetric ($\p_\theta v=0$) Navier-Stokes equations under the Navier (total) slip boundary condition:
\begin{equation}\label{ASNS}
	\begin{cases}
		\left(\Delta-\f1{r^2}\right)v_r-(v_r\p_r+v_3\p_{x_3})v_r+\f{v^2_{\theta}}{r}-\p_rP-\p_tv_r=f_r&\quad\text{in $D\times(0,\infty)$}\,,\\
		\left(\Delta-\f1{r^2}\right)v_{\theta}-(v_r\p_r+v_3\p_{x_3})v_{\theta}+\f{v_{\theta}v_r}{r}-\p_rP-\p_tv_{\theta}=f_{\theta}&\quad\text{in $D\times(0,\infty)$}\,,\\
		\Delta v_3-(v_r\p_r+v_3\p_{x_3})v_{3}-\p_{x_3}P-\p_tv_{x_3}=f_{3}&\quad\text{in $D\times(0,\infty)$}\,,\\
		\f1r\p_r(rv_r)+\p_{x_3}v_3=0&\quad\text{in $D\times(0,\infty)$}\,,\\
		\p_{x_3}v_r=\p_{x_3}v_{\theta}=0\,,v_3=0& \quad\text{on $\p^HD$}\,,\\
		\p_{x_3}v_{\theta}=\f{v_{\theta}}{r}\,,\,\p_rv_{x_3}=0\,, v_r=0&\quad\text{on $\p^VD$}\,,
	\end{cases}
\end{equation}
where
\[v_r=v\cdot e_r\,,\, v_\theta=v\cdot e_\theta\,,\,v_3=v\cdot e_3\,,\]
and
\[f_r=v\cdot f_r\,,\, f_\theta=f\cdot e_\theta\,,\,f_3=f\cdot e_3\,.\]
$\p^HD$ and $\p^VD$ denote respectively the horizontal and vertical parts of the boundary $\p D$. Concerning results on the axially symmetric Navier-Stokes equations, classical references are, among others, \cite{La} and \cite{UY}. It is known that if $v_{\theta}= 0$ and $f$ is regular, then blow up does not occur in $\R^3\,$ for any time $t>0$. Hence it looks suitable to start by the more stringent case, namely by assuming in \eqref{ASNS} not merely that $v_{\theta}\neq 0,$ but also that $v_r=v_3=0$, $f_r=f_3=0$, $f_{\theta}=f_{\theta}(r,t)\,.$ The problem reduces to solve the following system:
\begin{equation}\label{reduce1}
	\begin{cases}
		\f{v^2_{\theta}}{r}-\p_rP=0\,,\\
		\left(\Delta-\f{1}{r^2}\right)v_{\theta}-\p_tv_{\theta}=f_{\theta}\,,\\
		\p_{x_3}P=0\,.
	\end{cases}
\end{equation}

Let $\phi$ be a solution of the equation
\begin{equation}\label{reduce2}
	\begin{cases}
		\left(\p_{r}^2-\f{1}{r^2}\right)\phi-\p_t\phi=h\quad\text{in $D\times[0,T]$}\,,\\
		\phi(0,t)=0\,,
	\end{cases}
\end{equation}
where $v_{\theta}=\phi$, $h=f_{\theta}$, and
\[P=\int_0^r\f{v^2_{\theta}}{l}\,dl
\,.\]
Then the couple $v_\theta\,,P$ satisfy \eqref{reduce1}\,.

Next Zhang looks for self similar solution of \eqref{reduce2} in the form of
\begin{equation}
	\phi=\f{1}{\sqrt{2(T-t)}}\phi_0\left(\f{r}{\sqrt{2(T-t)}}\right)\,,\,
	h=\f{1}{[2(T-t)]^{\f32}}k\left(\f{r}{\sqrt{2(T-t)}}\right)\,.
\end{equation}
Then \eqref{reduce2} reduces to the following ODE:
\begin{equation}\label{reduce3}
	\begin{cases}
		\phi''_0+\f1r\phi'_0-\f1{r^2}\phi_0-\phi_0-r\phi'_0=k(r)\,,\\
		\phi_0(0)=0\,,
	\end{cases}
\end{equation}
where $k=k(r)\,,$ $r\in [0,\infty)\,,$ is a smooth function supported in the unit interval $[0,1]$.\par
Zhang obtained an explicit solution of \eqref{reduce3}, namely
\[
\phi_0=-\f{1}{r}\int_0^rse^{\f{s^2}{2}}\int_0^\infty e^{-\f12l^2}k(l)dlds\,.
\]
Starting from this explicit solution, by setting  $\phi=\f{1}{\sqrt{2(T-t)}}\phi_0\left(\f{r}{\sqrt{2(T-t)}}\right)$, Zhang constructed the desired blow-up solution by taking $v=[\ln(\phi(r,t)+1)-\ln(\phi(1,t)+1)r]\,e_{\theta}$, which is no more self-similar.\par
\begin{remark}\label{main}
It is worth noting that the assumption $v_r=v_3=0$ implies that $v\cdot\n v=(v_r\p_r+v_3\p_{x_3})v=0$. Hence the nonlinear term in the Navier-Stokes equation has essentially no effect. It follows that proofs and results also apply to the Stokes evolution problem. It follows that under bounded external forces blow-up is here not possible. However this fact is not significant by itself since, as far as we know, better results are not disposable, even under the help of the non linear term. Please, see the last part of the preamble.\par
A clarifying feature of our blow up result is that external forces and solutions are smooth in $D\times[0,T)$. Blow up occurs only as $t\rightarrow T$.\par
To end, note that the situation for the Stokes evolution problem is a generalization of that for real functions: Blow-up of $u'(t)$ does not imply blow-up of $u(t).$
\end{remark}
Concerning related recent results, for the reader's convenience we repeat the references quoted in \cite{Zhang}, namely \cite{CSTY08,CSTY09,CFZ,KNSGS,LPYZZZ,Pan}\,. In \cite{LPYZZZ}, the authors constructed a finite time blow up example for a special cusp type domain.

\section{Motivation, strategy, main results, and remarks}

 \subsection{Motivation} Since in the classical literature weak solutions are generally studied under square-integrable forces with respect to spatial variables, a natural question is to ask whether the solution of \eqref{NS} may blow up for a forcing term in $L^1_tL^2_x$, usually considered in classical treatises like J. Leray's reference \cite{leray} and O.A. Ladyzhenskaya reference \cite{La-book69}.
 This motivation was the starting point of the present paper. Below we obtain external forces in $L^1_tL^p_x$, for each $p<2$. However, the classical case $L^1_tL^2_x$ remains open.

 \subsection{Strategy}
  We appeal to Zhang's basic ideas, by starting from his explicit solution $\phi$. But we obtain a supplementary degree of freedom $\al$ essentially by multiplication of the previous solutions by $r^\al$. More precisely, we consider solutions of the form $v=[r^\al\phi(r,t)-\phi(1,t)r]e_{\theta}$\,. Compared with Zhang's paper \cite{Zhang}, our advantage is that we may adjust $\al$ to obtain a distinct, much larger, set of blow-up solutions. Clearly we loose the self-similar structure of the very basic solutions. Note that our parameter $\al$ is not related to the $\al$ in Zhang's reference, introduced to satisfy the boundary condition on the lateral surface of the cylinder.

\vspace{0.2cm}

\subsection{Main Results}
In the sequel $D$ denotes the cylinder $B_2(0,1)\times[0,1]$ in $\R^3$, where $B_2(0,1)$ is the unit ball in $\R^2$ centred at origin. Everywhere	we assume that $\,1\leq q\leq \infty\,,$  $\,1\leq p < \infty\,,$ and  $1\leq k \leq 3\,.$ Hence $0\leq \alpha <3\,.$ Occasional different situations would be explicitly remarked.
\begin{theorem}\label{maintheo}
	Let $\al$ and a couple of exponents $q,p$ be given, such that $k-1\leq \al< k$ for some integer $k\,,$ $1\leq k \leq 3\,.$ and such that the constraint
	\begin{equation}\label{3menos}
		[(3-\al)p-2]q<2
	\end{equation}
holds. Then there exists a force $f\in L^q_tL^p_x$ and a corresponding, unique, solution $ v \in L^\infty_t L^2_x \cap L^2_t H^1_x $ of the Navier-Stokes equations
	\begin{equation}\label{NSEQ}
		\Delta v-v\cdot\n v-\n p-\p_tv=f
	\end{equation}
	in $D\times[0,T)$ which satisfies the non slip boundary condition $v=0$ on the vertical boundary of $D$ and the Navier total slip boundary condition on the horizontal boundary of $D$. In addition, $v$ satisfies the point wise blow up relation
\begin{equation}\label{geral}
\|\n^{k-1}v(x,t)\|_{L^{\infty}_x}\to +\infty \quad \textrm{as} \quad t\to T\,.
\end{equation}
Moreover $v\in L^{\cq}_tL^{\cp}_x$ for all pair $\cp,\cq$ satisfying the inequality
\begin{equation}\label{cpq}
		[(1-\al)\cp-2]\cq<2\,.
\end{equation}
External forces $f$ and solutions $v$ are smooth in $D\times[0,T)$.\\
If $k\geq2$ then one also has $v\in C^{k-2}(D\times[0,T])$.\par
\end{theorem}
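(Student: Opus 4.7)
The plan is to follow the strategy of Section 4 and set
\[ v = [r^\alpha \phi(r,t) - \phi(1,t)\,r]\, e_\theta, \]
where $\phi(r,t) = (2(T-t))^{-1/2}\,\phi_0(r/\sqrt{2(T-t)})$ is Zhang's explicit self-similar solution of \eqref{reduce2}, built from a fixed smooth $k$ supported in $[0,1]$. Since $v_r = v_3 = 0$ and $v$ depends only on $(r,t)$, the convective term $v\cdot\nabla v = -(v_\theta^2/r)\, e_r$ is a gradient, so choosing $P = \int_0^r v_\theta^2/l\, dl$ absorbs it into the pressure, leaving only the azimuthal equation. Writing $L := \partial_r^2 + r^{-1}\partial_r - r^{-2}$ and using $L r = 0$ together with the Leibniz-type identity $L(r^\alpha\phi) = r^\alpha L\phi + \alpha^2 r^{\alpha-2}\phi + 2\alpha r^{\alpha-1}\phi'$, a direct computation gives
\begin{equation*}
f_\theta = r^\alpha h + \alpha^2 r^{\alpha-2}\phi + 2\alpha r^{\alpha-1}\phi' + r\, \partial_t \phi(1,t).
\end{equation*}
Because $k$ is supported in $[0,1]$, the explicit formula yields $\phi_0(y) = -C_0/y$ for $y \geq 1$, so $\phi(1,t) \equiv -C_0$ for all $t$ sufficiently close to $T$, which kills the last summand. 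The boundary conditions are then immediate: $v_\theta(1,t) = \phi(1,t) - \phi(1,t) = 0$ gives no-slip on $\partial^V D$, while the $x_3$-independence of $v$ combined with $v_r = v_3 = 0$ makes the Navier conditions on $\partial^H D$ automatic.

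The heart of the proof will be the estimate $f \in L^q_t L^p_x$. The key ingredient is the two-regime description of $\phi_0$: it is smooth and linear near the origin, and is given exactly by $-C_0/y$ for $y \geq 1$; consequently $\phi$ and $\phi'$ are smooth and bounded outside the self-similar core $r \leq r_* := \sqrt{2(T-t)}$, while inside it one has $|\phi| \lesssim (T-t)^{-1/2}$ and $|\phi'| \lesssim (T-t)^{-1}$. Splitting each spatial integral at $r_*$ and inserting these pointwise bounds, I expect to establish
\begin{equation*}
\|r^\alpha h\|_{L^p_x} + \|r^{\alpha-2}\phi\|_{L^p_x} + \|r^{\alpha-1}\phi'\|_{L^p_x} \lesssim (T-t)^{(\alpha-3)/2 + 1/p},
\end{equation*}
whose $L^q_t$-integrability on $(0,T)$ is guaranteed by the hypothesis $[(3-\alpha)p-2]\,q<2$. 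The same pointwise analysis yields $\|v\|_{L^{\cp}_x} \lesssim (T-t)^{(\alpha-1)/2 + 1/\cp}$, giving $v \in L^{\cq}_t L^{\cp}_x$ under \eqref{cpq}. The pointwise blow-up \eqref{geral} will be proved by evaluating $\nabla^{k-1} v_\theta$ at $r = r_*$: the dominant summand is of order $r^{\alpha-(k-1)}\phi \sim (T-t)^{(\alpha-k)/2}$ there, which diverges since $\alpha < k$.

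The remaining points are essentially routine. Smoothness of $v$ and $f$ on $D \times [0,T)$ is inherited from that of $\phi_0$ and the non-vanishing of $T-t$ on this set. Uniqueness in $L^\infty_t L^2_x \cap L^2_t H^1_x$ follows from classical weak-strong uniqueness, since $v$ is a strong solution on $[0,T)$ belonging to this energy class. For $k \geq 2$ the condition $\alpha \geq 1$ is exactly what is needed for $v$ itself, together with its derivatives of order $\leq k-2$, to remain pointwise bounded uniformly as $t \to T$, yielding $v \in C^{k-2}(D \times [0,T])$. The step I expect to be most delicate is the scaling estimate for $f$: one must simultaneously control all three nonconstant summands of $f_\theta$ in the two regions $r \lessgtr r_*$, and verify that each piece combines into the single critical exponent $(\alpha-3)/2 + 1/p$; any non-optimal treatment of the transition at $r_*$ would yield a worse power of $T-t$ and invalidate the stated hypothesis.
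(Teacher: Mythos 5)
Your proposal follows essentially the same route as the paper: multiply Zhang's self-similar profile by $r^{\al}$, use the commutator identity $L(r^{\al}\phi)=r^{\al}L\phi+\al^2 r^{\al-2}\phi+2\al r^{\al-1}\p_r\phi$ to read off the new force, estimate all terms by splitting the radial integral at $r\sim\sqrt{T-t}$ (your exponent $(\al-3)/2+1/p$ for $\|f\|_{L^p_x}$ and $(\al-1)/2+1/\cp$ for $\|v\|_{L^{\cp}_x}$ match the paper's Lemma \ref{lem} and \eqref{below}), obtain the blow up from the scaling $(T-t)^{(\al-k)/2}$, and restore the no-slip condition by subtracting $\phi(1,t)\,r$, which is harmless since $Lr=0$ and $\phi(1,t)$ is constant near $T$. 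The argument is correct and coincides with the paper's proof in all essential steps.
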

\begin{remark}\label{clasic}
Note that for $k=2$ equation \eqref{geral} means blow up of the vorticity, and for $k=3$ means blow up of the solutions, as classical solutions. For $k\geq 2\,,$ equation \eqref{cpq} loses interest since it is trivially verified for any pair of positive reals. This is a forewarning that the solution will be much smoother, as claimed in the theorem.
\end{remark}

Let's now put in evidence the case case $k=1$. Note that assumption \eqref{3menos} implies that $q<\infty$.
\begin{corollary}\label{coro}
For any $0\leq \al< 1$, and any couple $q,p$ satisfying the constraint \eqref{3menos}, there exists a force $f\in L^q_tL^p_x$ such that the corresponding, unique, solution $v$ of the Navier-Stokes equations \ref{NSEQ} satisfying the no slip boundary condition $v=0$ on the vertical boundary of $D$ and
the Navier total slip boundary condition on the horizontal boundary of $D\,,$ satisfies the blow up condition
	\begin{equation}\label{estco}
		\lim_{ t\to T} \|v(x,t)\|_{L^{\infty}_x}= +\infty\,.
	\end{equation}
Moreover $v\in L^{\cq}_tL^{\cp}_x$ for all couple $\cp, \cq$ such that $[(1-\al)\cp-2]\cq<2$.\par
External forces $f$ and solutions $v$ are smooth in $D\times[0,T)$. Blow up occurs only at time $T$.\par
\end{corollary}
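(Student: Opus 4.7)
The plan is to deduce the corollary as the specialization of Theorem \ref{maintheo} to the case $k=1$. First I would observe that the hypothesis $0\leq \alpha<1$ is precisely the range $k-1\leq \alpha<k$ for $k=1$, and the constraint \eqref{3menos} on the pair $(q,p)$ is identical in both statements; so Theorem \ref{maintheo} produces a forcing term $f\in L^q_tL^p_x$ and a unique solution $v\in L^\infty_tL^2_x\cap L^2_tH^1_x$ to \eqref{NSEQ} on $D\times[0,T)$, smooth in $D\times[0,T)$, with the prescribed boundary conditions (no slip on $\partial^V D$, Navier total slip on $\partial^H D$).

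Next I would translate the blow-up conclusion. For $k=1$ the quantity $\nabla^{k-1}v$ in \eqref{geral} is simply $v$ itself, so
\begin{equation*}
\|\nabla^{k-1}v(\cdot,t)\|_{L^\infty_x}=\|v(\cdot,t)\|_{L^\infty_x}\longrightarrow+\infty\quad\text{as }t\to T,
\end{equation*}
which is exactly the blow-up relation \eqref{estco}. The moreover clause follows verbatim: the theorem asserts $v\in L^{\cq}_tL^{\cp}_x$ for every pair with $[(1-\alpha)\cp-2]\cq<2$, and this is the condition repeated in the corollary. The additional $C^{k-2}(D\times[0,T])$ regularity stated in Theorem \ref{maintheo} only applies when $k\geq 2$, so no extra statement is needed here. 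Finally, the assertion that blow up occurs only at time $T$ is a direct restatement of the smoothness of $v$ on $D\times[0,T)$ already furnished by the theorem.

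There is essentially no obstacle, since every clause of the corollary is a verbatim or trivially reinterpreted consequence of the $k=1$ instance of Theorem \ref{maintheo}. The only point deserving explicit mention is the identification $\nabla^0 v=v$, which converts \eqref{geral} into \eqref{estco}; beyond that, the assumption \eqref{3menos} automatically forces $q<\infty$, as already noted in the corollary, because at $q=\infty$ the inequality would require $(3-\alpha)p<2$, which combined with $\alpha<1$ and $p\geq 1$ would be incompatible for $p$ large enough but does hold for small $p$; this subtlety is immaterial to the existence statement and needs no separate argument.
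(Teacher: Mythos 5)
Your proof is correct and takes exactly the paper's (implicit) route: the corollary is simply the $k=1$ instance of Theorem \ref{maintheo}, with the identification $\nabla^{0}v=v$ converting \eqref{geral} into \eqref{estco} and all other clauses carried over verbatim. One parenthetical aside is slightly off: since the paper assumes $1\le p<\infty$ and here $\al<1$, the condition $(3-\al)p\le 2$ that would be needed to allow $q=\infty$ fails for \emph{every} admissible $p$ (not merely for large $p$), which is precisely why \eqref{3menos} forces $q<\infty$ --- but as you note this is immaterial to the argument.
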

The uniqueness claim follows from the smoothness for $t<T$.\par
The particular case below appears clarifying. For simplicity we assume that the exponents $\cp$ and $\cq$ are a priori given.
\begin{corollary}\label{maincoro}
Let be given a pair of exponents $(q,p)$such that  $p<1+\f1q\,,$
and a couple of arbitrary large exponents $\cq$ and $\cp\,.$  Then there is a force $f\in L^q_tL^p_x$ such that the corresponding solution $v$ satisfies the general properties described in Corollary \ref{coro}, belongs to the space $ L^{\cq}_tL^{\cp}_x\,,$ and verifies the unboundedness property \eqref{estco}.
\end{corollary}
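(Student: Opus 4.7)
The plan is to deduce Corollary \ref{maincoro} immediately from Corollary \ref{coro} by exhibiting a parameter $\al \in [0,1)$ that simultaneously satisfies both admissibility conditions \eqref{3menos} and \eqref{cpq}. First I would rewrite these two inequalities as explicit lower bounds on $\al$:
\begin{equation*}
\al > 3 - \frac{2(q+1)}{pq} \quad \text{and} \quad \al > 1 - \frac{2(\cq+1)}{\cp\cq}.
\end{equation*}

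Next I would verify that both lower bounds are strictly less than $1$. The crucial observation is that the inequality $3 - 2(q+1)/(pq) < 1$ is algebraically equivalent to $p < 1 + 1/q$, which is exactly the hypothesis imposed on the pair $(q,p)$. The second lower bound is trivially less than $1$ for any positive $\cp, \cq$. Consequently, the open interval
\begin{equation*}
\Bigl( \max\Bigl\{0,\; 3 - \tfrac{2(q+1)}{pq},\; 1 - \tfrac{2(\cq+1)}{\cp\cq}\Bigr\},\; 1 \Bigr)
\end{equation*}
is non-empty for any choice of finite $\cp, \cq$, however large. Picking any $\al$ from this interval and invoking Corollary \ref{coro} with this $\al$ and the given exponents $(q,p)$ yields a force $f \in L^q_tL^p_x$ and a unique solution $v$ that is smooth on $D \times [0,T)$, meets the prescribed boundary conditions on the horizontal and vertical boundaries of $D$, belongs to $L^{\cq}_tL^{\cp}_x$ (because \eqref{cpq} holds by construction), and satisfies the blow-up relation \eqref{estco}.

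I do not anticipate any real obstacle: the statement is essentially a routine algebraic check that the admissible range of $\al$ provided by Corollary \ref{coro} remains non-empty under the borderline hypothesis $p < 1 + 1/q$. The conceptual content is that the limiting case $\al \to 1^{-}$ in \eqref{3menos} reproduces precisely the condition $p < 1 + 1/q$, while \eqref{cpq} becomes vacuous in the same limit. This exploitation of the extra degree of freedom $\al$ is exactly the improvement the paper achieves over Zhang's self-similar construction, and Corollary \ref{maincoro} makes this gain transparent in the $L^{\cq}_tL^{\cp}_x$ framework.
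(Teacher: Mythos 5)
Your proof is correct and follows essentially the same route as the paper's: both arguments choose $\al\in[0,1)$ sufficiently close to $1$, using the hypothesis $p<1+\f1q$ to make \eqref{3menos} satisfiable and the fact that \eqref{cpq} becomes vacuous as $\al\to 1^-$, and then invoke Corollary \ref{coro}. Your explicit lower bounds on $\al$ are just a cleaner restatement of the paper's choice of $\ep$ with $(1+\ep)p=1+\f1q$, $\ep\cp\le 1+\f1{\cq}$, and $1-2\ep<\al<1$.
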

For convenience we prove here this last result. Fix $\ep>0$ such that $(1+\ep) p= 1+\f1q\,,$ and $\ep \cp\leq 1+\f{1}{\cq}\,.$ Next fix $\al$ such that $1-2\ep<\al<1\,.$ It easily follows that \eqref{3menos} holds. Hence there is an external force $f=f_\ep \in L^{q}_tL^{p}_x$ for which \eqref{estco} holds. Furthermore $[(1-\al)\cp-2]\cq< [2\ep\cp-2]\cq<2\,.$ Hence
$v\in L^{\cq}_tL^{\cp}_x\,.$
\begin{remark}
For any arbitrarily large $q$, by taking $p=1\,,$ we obtain \cite[Theorem 1.1, item (2)]{Zhang}.
\end{remark}
\begin{remark}
By taking $q=1$ and $p<2\,,$ we show that there are external forces $f \in L^1_tL^p_x\,$ such that \eqref{estco} holds for all $p<2$. For $p=2$ the problem remains open.\par
\end{remark}
\begin{remark}
In correspondence to any couple of fixed, but arbitrarily large, exponents $\cq$ and $\cp$, there are blow up solutions satisfying $v \in L^\cq_t L^\cp_x\,.$ Note that solutions are not merely strong solutions up to time $T$, but even much more. In fact, the classical quantity $2/\cq + 3/\cp\,,$ always positive here, can be chosen arbitrarily small. For instance, in correspondence to any fixed, arbitrarily large exponent $\cp$, there are blow up solutions $v$ satisfying $v\in L^\infty_t L^\cp_x\,.$ But the border line case $\cp= \infty $ can not occur, due to our blow up result. So, the result looks significant.
\end{remark}

\vspace{0.2cm}

For $p=2$, there exist external forces $f\in  L^q_tL^2_x\,,$ for arbitrarily large $q\,,$ for which
\begin{equation}\label{nab}
	\|\n v(x,t)\|_{L^\infty_x}\to\infty,\quad t\to T\,.
\end{equation}
On the other hand, there are external forces $f \in L^1_tL^p_x\,,$ for arbitrarily large $p\,$ for which \eqref{nab} holds.\par
It could be of some interest to consider the case $q=p\,.$ The above corollary shows that blow up holds if $f \in L^q_tL^p_x\,,$ with $q=p<\f{1+\sqrt{5}}{2}\,.$\par

A last remark: It is well known that, roughly, if  $f \in L^q_tL^p_x\,,$ with $\f2q + \f3p <2$ then solutions of Stokes linear problem are bounded (actually, H\"older continuous). As the reader easily verifies, in the above three particular cases token into consideration, namely, the two extreme cases $q=1$ and $q=\infty$, and the one with $q=p$, the quantity  $\f2q + \f3p$ is larger then $2$. Actually, one easily verifies that there is a positive constant $c$ such that $\f2q + \f3p > 2+c\,,$ for all pairs $(q,p)$ leading to the blow up results described in the corollary.
\section{Proof of Theorem \ref{maintheo}}
First, by Zhang's paper (1.11), it follows that  $\phi=\f{1}{\sqrt{2(T-t)}}\phi_0\left(\f{r}{\sqrt{2(T-t)}}\right)$ solves the equation
\begin{equation}
	\left(\Delta-\f{1}{r^2}\right)\phi-\p_t\phi=h\,,
\end{equation}
where $r=|x|$,
\begin{equation}
	\phi_0=\phi_0(r)=-\f{1}{r}\int_0^rse^{\f{s^2}{2}}\int_0^\infty e^{-\f12l^2}k(l)dlds\,,
\end{equation}
\begin{equation}
	h=\f{1}{[2(T-t)]^{\f32}}k\left(\f{r}{\sqrt{2(T-t)}}\right)\,,
\end{equation}
and $k=k(r)$  is a fixed smooth function, defined for all $r>0\,,$ but supported in the unit interval  $[0,1]$.

Set $\tilde{\phi}=r^{\al}\phi$. One has
\begin{equation}
	\left(\Delta-\f{1}{r^2}\right)\tilde{\phi}-\p_t\tilde{\phi}=r^{\al}h+\al^2r^{\al-2}\phi+2\al r^{\al-1}\p_r\phi:=h_1\,.
\end{equation}
We remark that $\tilde{\phi}$ and $h_1$ will be, respectively, the solution $v$ and the data $f$ in equation \eqref{NSEQ}.\par
It is easy to check that (see, Zhang'paper (1.16))
\begin{equation}
	|\phi|\leq\f{Cr}{r^2+T-t}\,,\quad |\p_r\phi|\leq\f{C}{r^2+T-t}\,,
\end{equation}
so we have that
\begin{equation}
	|\al^2r^{\al-2}\phi+2\al r^{\al-1}\p_r\phi|\leq C\f{r^{\al-1}}{r^2+T-t}\,.
\end{equation}
\begin{lemma}\label{lem}
	For all $1+p\al-p>-1$, that is $\al>1-\f{2}{p}$, we have that
	\begin{equation}
		\int_D|h_1|^pdx\leq
		\begin{cases}
			\f{C}{(T-t)^{\f{3p-2-p\al}{2}}} \quad \textrm{if}&\,\f{3p-p\al}{2}\neq1\,,\\
			C|\ln(T-t)|\quad \textrm{if}&\quad \f{3p-p\al}{2}=1\,.
		\end{cases}
	\end{equation}
\end{lemma}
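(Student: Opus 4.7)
The plan is to split $h_1$ into two natural pieces, reduce the $L^p(D)$ integral to a one-dimensional radial integral by axial symmetry (using also that $h_1$ is $z$-independent and that $D=B_2(0,1)\times[0,1]$), and bound each piece by elementary computation together with a single case split.

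By the triangle inequality and the pointwise bound $|\alpha^2 r^{\alpha-2}\phi + 2\alpha r^{\alpha-1}\partial_r\phi|\le C r^{\alpha-1}/(r^2+T-t)$ already displayed just above the lemma, one has $|h_1|^p \le C|r^\alpha h|^p + C\, r^{p(\alpha-1)}/(r^2+T-t)^p$. Writing $\int_D |h_1|^p\,dx=2\pi\int_0^1 |h_1(r,t)|^p\, r\,dr$, the task reduces to bounding $I_1:=\int_0^1 r^{p\alpha+1}|h|^p\,dr$ and $I_2:=\int_0^1 r^{p(\alpha-1)+1}/(r^2+T-t)^p\,dr$.

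For $I_1$ I use that $h$ is supported in $r\le\sqrt{2(T-t)}$ with $\|h\|_\infty\lesssim (T-t)^{-3/2}$; hence $I_1\lesssim (T-t)^{-3p/2}\int_0^{\sqrt{2(T-t)}} r^{p\alpha+1}\,dr = C(T-t)^{-(3p-2-p\alpha)/2}$, and this piece is always finite for $\alpha\ge 0$. For $I_2$ I split the radial range at $r=\sqrt{T-t}$. On the inner region $r^2+T-t\sim T-t$, so the integrand is $\sim (T-t)^{-p}r^{p(\alpha-1)+1}$; integrability at the origin forces $p(\alpha-1)+2>0$, i.e.\ the hypothesis $\alpha>1-2/p$, and a direct evaluation reproduces $C(T-t)^{-(3p-2-p\alpha)/2}$. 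On the outer region $r^2+T-t\sim r^2$, so the integrand is $\sim r^{p(\alpha-3)+1}$; integrating from $\sqrt{T-t}$ to $1$ gives $C(T-t)^{-(3p-2-p\alpha)/2}$ when $p(3-\alpha)\neq 2$, and yields the borderline contribution $C|\ln(T-t)|$ precisely when $p(3-\alpha)/2=1$.

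The only real obstacle is bookkeeping of the three exponents $p\alpha+1$, $p(\alpha-1)+1$ and $p(\alpha-3)+1$ arising from the two pieces: the middle one dictates the necessary assumption $\alpha>1-2/p$ for integrability at zero, while the last one is exactly what produces the power-versus-logarithm dichotomy in the statement. Summing the three contributions yields the claimed bound.
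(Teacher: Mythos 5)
Your argument is correct and follows essentially the same route as the paper: the same decomposition of $h_1$ into $r^\alpha h$ and the $\phi$-terms, the same pointwise bounds, the reduction to the radial integrals, the split of the radial range at $r=\sqrt{T-t}$, and the same identification of $p(3-\alpha)=2$ as the logarithmic borderline. The only (harmless) difference is that you treat all $\alpha$ with a single splitting of $[0,1]$, whereas the paper handles $\alpha>1$ separately without splitting, using the bound $r^{p\alpha-p}(r^2+T-t)^{-(p\alpha-p)/2}\le 1$ on the whole interval.
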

\begin{proof}
	First, we have that
	\begin{equation}
		\begin{split}
			\int_{D}|\al^2r^{\al-2}\phi+2\al r^{\al-1}\p_r\phi|^{p}dx
			\leq C\int_0^1\f{r^{1+p\al-p}}{(r^2+T-t)^{p}}dr\,.
		\end{split}
	\end{equation}
	
	When $1-\f{2}{p}<\al\leq1$, we split the interval $[0,1]$ into two parts: $[0,\sqrt{T-t}]$ and $[\sqrt{T-t},1]$.
	
	In $[0,\sqrt{T-t}]$, we have that
	\begin{equation}
		\begin{split}
			\int_0^{\sqrt{T-t}}\f{r^{1+p\al-p}}{(r^2+T-t)^{p}}dr
			\leq\f{1}{(T-t)^p}\int_0^{\sqrt{T-t}}r^{1+p\al-p}dr
			\leq\f{C}{(T-t)^{\f{3p-2-p\al}{2}}}\,.
		\end{split}
	\end{equation}
	
	In $[\sqrt{T-t},1]$, since $p-p\al\geq0$, we have that
	\[\f{(r^2+T-t)^{\f{p-p\al}{2}}}{r^{p-p\al}}=
	\left(\f{r^2+T-t}{r^2}\right)^{\f{p-p\al}{2}}\leq C\]
	for $r\in[\sqrt{T-t},1]$. Thus, we have
	\begin{equation}
		\begin{split}
			&\int_{\sqrt{T-t}}^1\f{r^{1+p\al-p}}{(r^2+T-t)^{p}}dr\\
			&=\int_{\sqrt{T-t}}^1\f{r}{(r^2+T-t)^{\f{3p-p\al}{2}}}\f{(r^2+T-t)^{\f{p-p\al}{2}}}{r^{p-p\al}}dr\\
			&\leq C\int_{\sqrt{T-t}}^1\f{r}{(r^2+T-t)^{\f{3p-p\al}{2}}}dr\\
			&=\f{C}2\int_{\sqrt{T-t}}^1\f{1}{(r^2+T-t)^{\f{3p-p\al}{2}}}d(r^2+T-t)\\
			&=C\begin{cases}
				\f1{3p-2-p\al}\left[\f{1}{[2(T-t)]^{\f{3p-2-p\al}{2}}}-\f{1}{(1+T-t)^{\f{3p-2-p\al}{2}}}\right]\,,\quad\f{3p-p\al}{2}\neq1\,,\\
				\f12[\ln(1+T-t)-\ln(T-t)]\,,\quad \f{3p-p\al}{2}=1\,,
			\end{cases}\\
			&=	\begin{cases}
				\f{C}{(T-t)^{\f{3p-2-p\al}{2}}}\,,\quad\f{3p-p\al}{2}\neq1\,,\\
				C|\ln(T-t)|\,,\quad \f{3p-p\al}{2}=1\,.
			\end{cases}
		\end{split}
	\end{equation}

\vspace{0.2cm}
	
	When $\al>1$, since $p\al-p>0$, it follows that
	\[\f{r^{p\al-p}}{(r^2+T-t)^{\f{p\al-p}{2}}}=
	\left(\f{r^2}{r^2+T-t}\right)^{\f{p\al-p}{2}}\leq1\,,\]
	for $r\in[0,1]$, which gives
	\begin{equation}
		\begin{split}
			&\int_{D}|\al^2r^{\al-2}\phi+2\al r^{\al-1}\p_r\phi|^{p}dx\\
			&\leq C\int_0^1\f{r^{1+p\al-p}}{(r^2+T-t)^{p}}dr\\
			&= C\int_0^1\f{r}{(r^2+T-t)^{\f{3p-p\al}{2}}}\f{r^{p\al-p}}{(r^2+T-t)^{\f{p\al-p}{2}}}dr\\
			&\leq C\int_0^1\f{r}{(r^2+T-t)^{\f{3p-p\al}{2}}}dr\\
			&\leq	\begin{cases}
				\f{C}{(T-t)^{\f{3p-2-p\al}{2}}}\,,\quad\f{3p-p\al}{2}\neq1\,,\\
				C|\ln(T-t)|\,,\quad \f{3p-p\al}{2}=1\,.
			\end{cases}
		\end{split}
	\end{equation}

\vspace{0.2cm}
	
	On the other hand, we have for all $\al\geq0$
	\begin{equation}
		\begin{split}
			&\int_{D}|r^{\al}h|^{p}dx\\
			&\leq \int_0^1\f{r^{p\al+1}}{[2(T-t)]^{\f{3p-1}{2}}}k^p\left(\f{r}{\sqrt{2(T-t)}}\right)d\left(\f{r}{\sqrt{2(T-t)}}\right)\\
			&=\f{1}{[2(T-t)]^{\f{(3-\al)p-2}{2}}}\int_0^1\left(\f{r}{\sqrt{2(T-t)}}\right)^{\f{3p-1}{2}}k^p\left(\f{r}{\sqrt{2(T-t)}}\right)d\left(\f{r}{\sqrt{2(T-t)}}\right)\,.	
		\end{split}
	\end{equation}
\end{proof}

From \eqref{3menos} we obtain
\begin{equation}
	p\al-p+1>2p-\f2q-1\geq-1\,.
\end{equation}
Thus one shows that $(3p-2-p\al)q<2$ implies $p\al-p+1>-1$\,. Then, from Lemma \ref{lem}, it follows that
\begin{equation}
	h_1\in L^q_tL^p_x
\end{equation}
for all $p, q,$ and $\alpha$ such that
\begin{equation}
	(3p-2-p\al)q<2\,.
\end{equation}

\vspace{1 cm}

Next we study the solution $v= \tilde{\phi}$. One has
\begin{equation}
	|\tilde{\phi}|\leq C\f{r^{\al+1}}{r^2+T-t}\,,\quad|\p_r\tilde{\phi}|\leq C\f{r^{\al}}{r^2+T-t}\,.
\end{equation}

Hence
\begin{equation}
	\int_D|\tilde{\phi}|^2dx\leq C\int_0^1\f{r^{2\al+3}}{(r^2+T-t)^2}dr\leq C\,,
\end{equation}
and also
\begin{equation}
	\begin{split}
		&\int_D|\p_r\tilde{\phi}|^2dx\\
		&\leq C\int_0^1\f{r^{2\al+1}}{(r^2+T-t)^2}dr\\
		&\leq C\int_0^1\f{r}{(r^2+T-t)^{2-\al}}\f{r^{2\al}}{(r^2+T-t)^{\al}}dr\\
		&\leq C\int_0^1\f{r}{(r^2+T-t)^{2-\al}}dr\\
		&\leq
		\begin{cases}
			\f{C}{(T-t)^{1-\al}}\,,\quad\al<1\,,\\
			C|\ln(T-t)|\,,\quad \al=1\,,\\
			C(1+T-t)^{\al-1}\,,\quad\al>1 \,.
		\end{cases}
	\end{split}
\end{equation}
If $\al>0$, then $ \f{C}{(T-t)^{1-\al}}\in L^1_t$, so $\tilde{\phi}\in L^\infty_tL_x^2\cap L^2_tH^1_x$ for $\al>0$.

Recall that
\begin{equation}
	\tilde{\phi}=r^{\al}\phi=\f{r^{\al}}{\sqrt{2(T-t)}}\phi_0\left(\f{r}{\sqrt{2(T-t)}}\right)\,.
\end{equation}
One has
\begin{equation}
	\begin{split}
		\p^k\tilde{\phi}=&\sum_{i=0}^kC^i_k(\p^{i}r^{\al})\p^{k-i}\phi\\
		\sim \,& \f{r^{\al}}{(\sqrt{2(T-t)})^{k+1}}(\p^k\phi_0)\left(\f{r}{\sqrt{2(T-t)}}\right)
		+\f{C(\al,k)r^{\al-k}}{\sqrt{2(T-t)}}\phi_0\left(\f{r}{\sqrt{2(T-t)}}\right)\,.
	\end{split}
\end{equation}

Note that $\phi_0$ has support in $[0,1]$, hence we may assume that
\[\f{r}{\sqrt{2(T-t)}}\leq1\,.\]
So if $k-1\leq \al< k$, one has
\begin{equation}
	\f{r^{\al}}{(\sqrt{2(T-t)})^{(k-2)+1}}\leq\left(\f{r}{\sqrt{2(T-t)}}\right)^{k-1}r^{\al-(k-1)}\leq 1\,,
\end{equation}
and
\begin{equation}
	\f{r^{\al-(k-2)}}{\sqrt{2(T-t)}}=\f{r}{\sqrt{2(T-t)}}r^{\al-(k-1)}\leq 1\,,
\end{equation}
which gives that
\begin{equation}
	\tilde{\phi}\in C^{k-2}(D\times[0,T])\,.
\end{equation}
On the other hand, if $k-1\leq \al< k$, one has
\begin{equation}
	\f{r^{\al}}{(\sqrt{2(T-t)})^{(k-1)+1}}=\left(\f{r}{\sqrt{2(T-t)}}\right)^{\al}\left(\f{1}{\sqrt{2(T-t)}}\right)^{k-\al}\to\infty\,,  \text{as $t\to T$}\,,
\end{equation}	
which shows that
\begin{equation}
	\p^{k-1}_r\tilde{\phi}(x,t)\to\infty,\quad \text{as $t\to T$}\,.
\end{equation}

One has
\begin{equation}\label{below}
	\begin{split}
		\int_D|\tilde{\phi}|^{\cp}dx\leq&C\int_0^1\f{r^{\cp\al+\cp+1}}{(r^2+T-t)^\cp}dr\\
		=&C\int_0^1\f{r}{(r^2+T-t)^{\cp-\f{\cp\al+\cp}{2}}}
		\f{r^{\cp\al+\cp}}{(r^2+T-t)^{\f{\cp\al+\cp}{2}}}dr\\
		\leq& C\int_0^1\f{r}{(r^2+T-t)^{\cp-\f{\cp\al+\cp}{2}}}dr\\
		\leq&\begin{cases}
			\f{C}{(T-t)^{\f{\cp-2-\cp\al}{2}}}\,,\quad\f{\cp-\cp\al}{2}\neq1\,,\\
			C|\ln(T-t)|\,,\quad \f{\cp-\cp\al}{2}=1\,.
		\end{cases}\,.
	\end{split}
\end{equation}

Hence $\tilde{\phi}\in L^\cq_tL^\cp_x$ provided that $[(1-\al)\cp-2]\cq<2$.

Finally, to verify that the boundary conditions are satisfied, set $v=(\tilde{\phi}+\beta r)e_{\theta}$, where $\beta=\int_0^1se^{\f{s^2}{2}}\int_s^\infty e^{-\f{l^2}{2}}k(l)dlds$. Actually, the Navier total slip boundary condition on the horizontal boundary of $D$ is satisfied since $v$ is independent of the vertical variable $x_3$ and $v_r=v_3=0$. On the other hand, it follows from Zhang's equation (1.20) that
\begin{equation}
	\tilde{\phi}(1,t)=-\int_0^1se^{\f{s^2}{2}}\int_s^\infty e^{-\f{l^2}{2}}k(l)dlds=-\beta\,.
\end{equation}
So the no slip boundary condition is satisfied on the vertical boundary. The proof of Theorem \ref{maintheo} is accomplished.

\end{document}